\documentclass[11pt]{amsart}

\usepackage{amssymb}
\usepackage{graphicx}
\usepackage{ifthen}
\usepackage{url}

\renewcommand{\vec}[1]{#1}
\newcommand{\E}{\mathbb E}
\newcommand{\Z}{\mathbb Z}
\newcommand{\R}{\mathbb R}
\newcommand{\C}{\mathbb C}
\newcommand{\MH}{{\mathcal H}}
\newcommand{\MNR}{{\mathcal R}}
\newcommand{\MP}{{\mathcal P}}
\newcommand{\MV}{{\mathcal V}}

\newcommand{\GL}{\mathsf{GL}}
\newcommand{\gldz}{\GL_d(\Z)}  
\newcommand{\gldr}{\GL_d(\R)}   
\newcommand{\sd}{{\mathcal S}^d}
\newcommand{\sdo}{{\mathcal S}^d_{>0}}

\DeclareMathOperator{\trace}{trace}
\DeclareMathOperator{\vol}{vol}
\DeclareMathOperator{\conv}{conv}
\DeclareMathOperator{\cone}{cone}
\DeclareMathOperator{\id}{id}
\DeclareMathOperator{\Min}{Min}
\DeclareMathOperator{\relint}{relint}
\DeclareMathOperator{\Aut}{Aut}
\DeclareMathOperator{\grad}{grad}

\newtheorem{definition}{Definition}
\newtheorem{theorem}[definition]{Theorem}

\newcounter{alg}

\newenvironment{bigalg}{\medskip%
                        \begin{figure}[htbp]%
                        \refstepcounter{alg}%
                        \begin{center}}%
                       {\end{center}\end{figure}\medskip}

% ==========================
% = Authors / Affiliations =
% ==========================

\author{Achill Sch\"urmann}
\address{Achill Sch\"urmann, Mathematics Department, 
Otto-von-Guericke University of Magdeburg, 39106 Magdeburg, Germany}
\email{achill@math.uni-magdeburg.de}
\thanks{The author was supported by the 
Deutsche Forschungsgemeinschaft (DFG) under grant SCHU 1503/4-2.
He thanks the Hausdorff Research Institute for Mathematics for its
hospitality and support.}

\title{Enumerating perfect forms} 

\subjclass[2000]{11-01,11-04; 11H55,20G20,90C57}

\begin{document}

%\sloppy

% ============
% = Abstract =
% ============

\begin{abstract}
A positive definite quadratic form is called
perfect, if it is uniquely determined by its
arithmetical minimum and the integral vectors
attaining it. 
In this self-contained survey we explain how to
enumerate perfect forms in $d$ variables up to arithmetical
equivalence and scaling.
We put an emphasis on practical issues 
concerning computer assisted enumerations. 
For the necessary theory of Voronoi we provide
complete proofs based on Ryshkov polyhedra.
This allows a very natural generalization to
$T$-perfect forms, which are
perfect with respect to 
a linear subspace $T$ in the space of quadratic forms.
Important examples include Gaussian,
Eisenstein and Hurwitz quaternionic perfect forms, for which
we present new classification results in dimensions
$8,10$ and $12$.
\end{abstract}

\maketitle

%
% Introduction
%

\section{Introduction}

In this paper we are concerned with {\em perfect forms},
which are {\em real positive definite quadratic forms} 
\begin{equation}  \label{eqn:quadform}
Q[\vec{x}]
=
\sum_{i,j=1}^d q_{ij} x_i x_j
\end{equation}
in $d$ variables $\vec{x}=(x_1,\dots,x_d)^t\in\R^d$,
determined uniquely by their
{\em arithmetical minimum}
\begin{equation}  \label{eqn:arithmin}      
\lambda(Q) 
=
\min_{\vec{x}\in \Z^d\setminus\{\vec{0}\}} Q[\vec{x}]
\end{equation}
and {\em its representations} 
\begin{equation}  \label{eqn:arithrep} 
\Min Q =\{\vec{x}\in \Z^d : Q[\vec{x}] = \lambda(Q) \}.
\end{equation}

The study of perfect forms goes back to the 
work of Korkin and Zolotarev \cite{kz-1877}.
They observed that perfection is necessary 
for positive definite quadratic forms in order 
to give a local maximum of the {\em Hermite invariant}
\begin{equation}
\MH(Q)=\frac{\lambda(Q)}{(\det Q)^{1/d}}
.
\end{equation}
Such forms are called {\em extreme}.

As briefly reviewed in Section~\ref{sec:quadforms},
finding the global maximum of the Hermite invariant, 
or equivalently the densest lattice sphere packing
is a widely studied problem. In this article we describe 
the only known algorithmic
solution of this problem which works in principle in every dimension. 
It is based on the classification respectively enumeration
of perfect forms.
We refer to \cite{rb-1979}, \cite{cs-1998}, \cite{martinet-2003}, 
\cite{gruber-2007} and \cite{schuermann-2008} 
for further reading.

Based on perfect forms, 
Voronoi \cite{voronoi-1907} developed a 
polyhedral reduction theory, which was later found to have 
several applications in other contexts.
It has for example been used for compactification of moduli spaces
(cf. for example \cite{amrt-1975}, 
\cite{mcconnell-1998}, \cite{shepherd-barron-2006}),
for computing the cohomology of $\gldz$ and of congruence subgroups,
as well as for computing 
algebraic $K$-groups $K_d(\Z)$ for small $d$
and up to small torsion
(cf. \cite{soule-1999}, \cite{egs-2002} and the appendix in \cite{stein-2007}).
A basic task in these computations is the enumeration
of perfect forms. In some of the applications it is also necessary
to understand more of the structure of
the {\em Ryshkov polyhedron} 
(to be defined in Section~\ref{sec:ryshkov})
whose vertices are perfect forms.

In this article we explain Voronoi's 
theory based on the Ryshkov polyhedron.
We provide complete proofs for all 
of its required properties. We think that this 
view is more accessible than the usual dual 
viewpoint, originally taken by Voronoi and
by most other authors subsequently. 
Voronoi's algorithm can be simply described
as a traversal search on the graph
consisting of vertices and edges of the Ryshkov 
polyhedron. This viewpoint 
allows in particular a very simple and direct
generalization to so called $T$-perfect forms:
Intersecting a linear subspace $T$ with the
Ryshkov polyhedron yields a lower dimensional 
Ryshkov polyhedron whose vertices are $T$-perfect 
forms. Voronoi's theory immediately generalizes.

The article is organized as follows.
In Section~\ref{sec:quadforms} we review some
necessary background and notations.
In Section~\ref{sec:ryshkov} we define the 
Ryshkov polyhedron and prove that it is
``locally finite''. This yields
the grounds for Voronoi's algorithm to
be described in Section~\ref{sec:voronoi}.
Here we put special emphasis on practical 
issues related to running Voronoi's
algorithm on a computer.
In Section~\ref{sec:extremality}
we briefly explain how to determine extreme forms.
Section~\ref{sec:symmetries} contains some informations
on automorphism groups and their computation
and in Section~\ref{sec:Tperfect} we explain
the ``$T$-theory'', when 
restricting to a linear subspace~$T$.
As examples of linear subspaces that contain forms
invariant with respect to a finite group of automorphisms,
we consider in Section~\ref{sec:examples}
forms with a {\em Gaussian}, {\em Eisenstein} or 
{\em Hurwitz quaternionic} structure.
We obtain several new classification results.

\section{Background on positive definite quadratic forms}
\label{sec:quadforms}

In this section we review -- basically from scratch -- 
some of the historical background and notations
used in the remaining of the article.
The reader familiar with most of this background
may simply skip this section.

We consider real quadratic forms 
in $d$ variables as in \eqref{eqn:quadform},
hence with coefficients $q_{ij}\in\R$. 
By assuming $q_{ij}=q_{ji}$ without loss of generality, 
we simply identify the quadratic form $Q$
with the real symmetric matrix $Q=(q_{ij})_{i,j=1,\dots,d}$.
The space of all real quadratic forms in $d$ variables is 
identified with the space     \label{not:sd}
\[
\sd = \left\{ Q\in \R^{d\times d} : Q^t = Q \right\}
\]
of real symmetric $d\times d$ matrices.
Using matrix notation we have
$Q[\vec{x}] = \vec{x}^t Q \vec{x}$.
Endowed with the inner product
\[ 
\langle Q, Q'\rangle = \sum_{i,j=1}^d q_{ij}q'_{ij} = \trace (Q\cdot Q')
, 
\]
$\sd$ becomes a $\binom{d+1}{2}$-dimensional Euclidean space.

Two quadratic forms $Q,Q'\in\sd$ are
called {\em arithmetically (or integrally) equivalent},
if there exists a matrix $U$ in the group
$$  \label{not:gldz}
\gldz = \{ U \in \Z^{d\times d} : |\det U | = 1\}
$$ 
such that
$$
Q' = U^t Q U.
$$
Note that $Q[\Z^d]= Q'[\Z^d]$ for arithmetical
equivalent $Q$ and $Q'$, but the opposite may not hold.

A quadratic form $Q\in\sd$
is {\em positive definite},
if $Q[\vec{x}]>0$ for all $\vec{x}\in\R^d\setminus\{0\}$. The set of all
positive definite quadratic forms (PQFs from now on) is denoted by $\sdo$. 
It is not hard to see that $\sdo$ 
is an open (full dimensional) {\em convex cone}  
in $\sd$ with apex $0$.
In particular for $Q\in\sdo$, the {\em open ray}  
$\{ \lambda Q : \lambda>0 \}$   
is contained in $\sdo$ as well. 
Only for PQFs the arithmetical minimum defined in
\eqref{eqn:arithmin} is greater than~$0$.

A PQF $Q$ defines a real valued strictly convex function
on $\R^d$ and for $\lambda>0$
\begin{equation}  \label{eqn:ellipsoid}
E(Q,\lambda)
=
\{
\vec{x}\in\R^d : Q[\vec{x}] \leq \lambda
\}
\end{equation}
is a non-empty {\em ellipsoid}  \index{ellipsoid}
with center $\vec{0}$, providing a geometric interpretation of a PQF.
The arithmetical minimum is the smallest number $\lambda>0$ 
for which the ellipsoid $E(Q,\lambda)$ contains an integral
point aside of $\vec{0}$. 
The integral points $\vec{x}$ in $\Min Q$ (see \eqref{eqn:arithrep}) 
lie on the boundary of the ellipsoid $E(Q,\lambda(Q))$.

Hermite, who initiated the 
systematic  arithmetic study
of quadratic forms in $d$ variables
found in particular an upper bound
of the arithmetical minimum in terms of
the {\em determinant} $\det Q$ of $Q$:

\begin{theorem}[Hermite, \cite{hermite-1850}] \label{thm:hermite}
$$
\lambda(Q)\leq (\det Q)^{1/d} \cdot \left(\frac{4}{3}\right)^{(d-1)/2}
\;\;
\mbox{
for all $Q\in\sdo$.}
$$
\end{theorem}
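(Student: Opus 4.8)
The plan is to argue by induction on the number of variables $d$, combining a normalization of $Q$ via arithmetical equivalence with a Gram--Schmidt (``completing the square'') decomposition that peels off a positive definite form in $d-1$ variables. For $d=1$ we have $Q[\vec{x}] = q_{11}x_1^2$, so $\lambda(Q) = q_{11} = \det Q$ and the inequality holds with equality since $(4/3)^0 = 1$.

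For the inductive step I would first record that both $\lambda$ and $\det$ are invariant under arithmetical equivalence: if $Q' = U^t Q U$ with $U\in\gldz$ then $Q'[\Z^d] = Q[\Z^d]$ and $\det Q' = (\det U)^2\det Q = \det Q$. Any $\vec{m}\in\Min Q$ is primitive --- if $\vec{m} = k\vec{m}'$ with an integral $\vec{m}'$ and $k\ge 2$, then $Q[\vec{m}'] = \lambda(Q)/k^2 < \lambda(Q)$ contradicts minimality --- hence $\vec{m}$ extends to a $\Z$-basis of $\Z^d$, i.e.\ $\vec{m} = U\vec{e}_1$ for some $U\in\gldz$. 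Replacing $Q$ by $U^tQU$ we may assume $q_{11} = \lambda(Q) =: \lambda$. Then write
\[
Q[\vec{x}]
=
q_{11}\left(x_1 + \sum_{j=2}^d \frac{q_{1j}}{q_{11}}\,x_j\right)^{2} + Q^*[\vec{x}'],
\qquad \vec{x}' = (x_2,\dots,x_d)^t,
\]
where the Schur complement $Q^*$ is a positive definite form in $d-1$ variables with $\det Q = q_{11}\det Q^* = \lambda\det Q^*$. Choosing $\vec{y}'\in\Z^{d-1}\setminus\{\vec{0}\}$ with $Q^*[\vec{y}'] = \lambda(Q^*)$, and then $y_1\in\Z$ a nearest integer to $-\sum_{j=2}^d \tfrac{q_{1j}}{q_{11}}y_j$, the linear form in parentheses evaluated at $\vec{y} = (y_1,\vec{y}')^t$ has absolute value at most $\tfrac12$, and since $\vec{y}\ne\vec{0}$,
\[
\lambda \le Q[\vec{y}] \le \tfrac14\,q_{11} + Q^*[\vec{y}'] = \tfrac14\,\lambda + \lambda(Q^*),
\qquad\text{hence}\qquad \lambda \le \tfrac43\,\lambda(Q^*).
\]

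Finally I would insert the induction hypothesis $\lambda(Q^*)\le(\det Q^*)^{1/(d-1)}(4/3)^{(d-2)/2}$ together with $\det Q^* = \det Q/\lambda$ to obtain $\lambda \le (4/3)^{d/2}(\det Q/\lambda)^{1/(d-1)}$; raising this to the power $d-1$ gives $\lambda^d \le (4/3)^{d(d-1)/2}\det Q$, and taking $d$-th roots yields the claim. The bookkeeping with exponents is routine; the two steps that actually need care are the normalization --- that a minimal vector is primitive and hence part of a $\Z$-basis, which is what legitimizes assuming $q_{11}=\lambda$ --- and the rounding argument, which is precisely where the constant $4/3$ enters (and for $d=2$ this already produces the optimal bound).
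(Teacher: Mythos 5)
Your proof is correct, and there is nothing in the paper to compare it against: Theorem~\ref{thm:hermite} is stated there with only a citation to Hermite's 1850 letter and no proof. Your argument --- normalize via a unimodular change of variables so that $q_{11}=\lambda(Q)$ (legitimate because minimal vectors are primitive and hence extend to a $\Z$-basis), peel off the Schur complement $Q^*$ with $\det Q = q_{11}\det Q^*$, round the linear form to get $\lambda \le \tfrac43\lambda(Q^*)$, and close the induction --- is exactly the classical inductive proof, and all the steps, including the exponent bookkeeping, check out.
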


Hermite's theorem implies
in particular the existence of {\em Hermite's constant}
\begin{equation} \label{eqn:hermite-constant}
\MH_d = \sup_{Q\in\sdo} \frac{\lambda(Q)}{(\det Q)^{1/d}}
.
\end{equation}
Hermite's constant and generalizations have been extensively studied,
e.g. in the context of algebraic number theory
and differential geometry. 
We refer to \cite{bavard-1997}, \cite{schmutz-1998},
\cite{coulangeon-2001} and \cite{watanabe-2004} for further reading.

The following {\em lattice sphere packing} interpretation is due to
Minkowski:
Using a {\em Cholesky decomposition} $Q=A^tA$ of a PQF $Q$,
with $A\in\gldr$,
the set $L=A\Z^d$ is a {\em (point) lattice}, that is,
a discrete subgroup of $\R^d$. The column vectors of the matrix~$A$
are referred to as a {\em basis} of~$L$. 
The maximum radius of non overlapping solid spheres
around lattice points of~$L$ is
$$        \label{not:lambda_L}
\lambda(L) = \frac{\sqrt{\lambda(Q)}}{2},
$$
the so called {\em packing radius}   
of $L$.
Denoting the solid {\em unit sphere} by $B^d$,
the {\em sphere packing density}  
$\delta(L)$ of a lattice $L$
is defined as the portion of space covered by solid spheres of radius $\lambda(L)$, hence 
$$     \label{not:sphere_packing_density}
\delta(L) = \frac{\vol(\lambda(L)B^d)}{\det L} =
\frac{\lambda(L)^d \vol B^d}{\det L}
.
$$
Note that $\delta$ is invariant with respect to isometries
and scalings of the lattice $L$.
The supremum
of possible lattice packing densities $\delta_d$ is, 
up to a constant factor,   
equal to a power of Hermite's constant.
Table~\ref{tab:sphere-packing-results} lists the dimensions
in which $\delta_d$ respectively Hermite's constant $\MH_d$ is known.

\begin{table}
\begin{tabular}{c|c|c|c|c}
$d$ & lattice & {$\delta_d$} & {$\MH_d$} & author(s) \\
\hline
 %$1$ & $\Z^1$               &  $1$        \\
 $2$ & ${\mathsf A}_2$    &  $0.9069\ldots$  & $\left(\frac{4}{3}\right)^{1/2}$ & Lagrange, 1773, \cite{lagrange-1773}\\
 $3$ & ${\mathsf A}_3={\mathsf D}_3$    &  $0.7404\ldots$  & $2^{1/3}$ &  Gauss, 1840, \cite{gauss-1840}\\
 $4$ & ${\mathsf D}_4$    &  $0.6168\ldots$  & $4^{1/4}$ & Korkin \& Zolotarev, 1877, \cite{kz-1877}\\
 $5$ & ${\mathsf D}_5$    &  $0.4652\ldots$  & $8^{1/5}$ & Korkin \& Zolotarev, 1877, \cite{kz-1877}\\
 $6$ & ${\mathsf E}_6$    &  $0.3729\ldots$  & $\left(\frac{64}{3}\right)^{1/6}$ & Blichfeldt, 1935, \cite{blichfeldt-1934} \\
 $7$ & ${\mathsf E}_7$    &  $0.2953\ldots$  & $64^{1/7}$ & Blichfeldt, 1935, \cite{blichfeldt-1934}\\
 $8$ & ${\mathsf E}_8$    &  $0.2536\ldots$  & $2$ & Blichfeldt, 1935, \cite{blichfeldt-1934}\\[0.3cm]
 $24$ & ${\mathsf \Lambda}_{24}$    &  $0.0019\ldots$  & $4$ & Cohn \& Kumar, 2004, \cite{ck-2004}\\  
\end{tabular}
\medskip
\caption{Known values of Hermite's constant.}
\label{tab:sphere-packing-results}
\end{table}

The lattices $\mathsf{A}_d$ for $d\geq 2$, 
$\mathsf{D}_d$ for $d\geq 3$ \label{not:root_lattices}
and $\mathsf{E}_d$ for $d=6,7,8$ are the so-called {\em root lattices}.
One of the most fascinating objects is the {\em Leech Lattice}~$\Lambda_{24}$
in $24$~dimensions.
Definitions and plenty of further information on these
fascinating lattices can be found in \cite{cs-1998}, \cite{martinet-2003}
and the online database \cite{ns-2008}.

Minkowski noticed \cite{minkowski-1891} that the trivial bound
\begin{equation} \label{eqn:trivial_bound}
\delta(L)\leq 1,
\end{equation}  
which is an immediate consequence of the sphere packing interpretation,
tremendously improves the upper bound for the arithmetical minimum
in Hermite's Theorem. In fact, \eqref{eqn:trivial_bound}
is equivalent to 
\begin{equation} \label{eqn:new-mink-bound}
\lambda(Q)\leq (\det Q)^{1/d} \cdot \frac{4}{(\vol B_d)^{2/d}}.
\end{equation}
showing that the exponential constant on the right in
Theorem~\ref{thm:hermite} can be replaced by 
a constant which grows roughly linear with~$d$.

This trivial, but significant improvement 
lead Minkowski to a powerful fundamental principle.
The ellipsoid $E(Q,r_Q)$, with $r_Q$ 
being the right hand side in \eqref{eqn:new-mink-bound},
has volume 
$$
\vol E(Q,r_Q)
= \vol(\sqrt{r_Q}A^{-1}B^d)
= r_Q^{d/2} (\det Q)^{-1/2} \vol B^d
= 2^d
.
$$
Minkowski discovered that  
not only ellipsoids of volume $2^d$ contain a
non-zero integral point, but also all other centrally symmetric
{\em convex bodies}  \index{convex!\hlp body} 
(non-empty, compact convex sets).

\begin{theorem}[Minkowski's Convex Body Theorem]
\label{thm:minkowski}
Any centrally symmetric convex body in $\R^d$
of volume $2^d$ contains a non-zero integral point.
\end{theorem}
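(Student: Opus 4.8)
The plan is to prove the theorem by the classical packing argument of Blichfeldt, followed by a short compactness step to cover the boundary case $\vol(K)=2^d$. The crux is the following lemma: \emph{if $S\subseteq\R^d$ is measurable with $\vol(S)>1$, then there exist two distinct points $x,y\in S$ with $x-y\in\Z^d$.} To prove it, let $C=[0,1)^d$, which tiles $\R^d$ under translations by $\Z^d$, and for $v\in\Z^d$ put $T_v=\{x\in C : x+v\in S\}$. By countable additivity $\sum_{v\in\Z^d}\vol(T_v)=\vol(S)>1=\vol(C)$, so $\int_C\sum_v\mathbf 1_{T_v}(x)\,dx>\vol(C)$; since the integrand is integer-valued, it is at least $2$ at some point $z\in C$, i.e.\ $z\in T_v\cap T_w$ for two distinct $v,w\in\Z^d$. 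Then $x=z+v$ and $y=z+w$ are distinct points of $S$ with $x-y=v-w\in\Z^d\setminus\{0\}$.

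Assuming first the strict inequality $\vol(K)>2^d$, I would apply the lemma to $S=\tfrac12 K$, which is convex (hence Lebesgue measurable) with $\vol(S)=2^{-d}\vol(K)>1$. This yields distinct $x,y\in\tfrac12 K$ with $x-y\in\Z^d\setminus\{0\}$. Then $2x\in K$ and, using central symmetry, $-2y\in K$; by convexity the midpoint $\tfrac12(2x)+\tfrac12(-2y)=x-y$ lies in $K$, so $x-y$ is a non-zero integral point of $K$, as required.

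For the boundary case $\vol(K)=2^d$ I would pass to the limit. For each integer $n\geq1$ the body $(1+\tfrac1n)K$ has volume $(1+\tfrac1n)^d2^d>2^d$, so by the strict case it contains a non-zero integral point $z_n$. All the $z_n$ lie in the compact set $2K$, hence take only finitely many values in $\Z^d$; thus some fixed $z\in\Z^d\setminus\{0\}$ lies in $(1+\tfrac1n)K$ for infinitely many $n$, and therefore for all $n$, since these sets are nested and decreasing. As $K$ is compact (in particular closed), $\bigcap_{n\geq1}(1+\tfrac1n)K=K$, so $z\in K$. The step I expect to require the most care is the measure-theoretic bookkeeping in the lemma --- in particular noting that a convex body is measurable so that the volumes above are legitimate, and phrasing the overlap argument so that it produces a genuine common point rather than merely an almost-everywhere coincidence; the scaling, symmetry and convexity manipulations, and the final compactness argument, are routine.
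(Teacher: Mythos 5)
Your proof is correct. Note that the paper itself gives no proof of Theorem~\ref{thm:minkowski} --- it is quoted as classical background immediately after the volume computation for the ellipsoid $E(Q,r_Q)$ --- so there is no argument of the author's to compare against. What you give is the standard route: Blichfeldt's overlap lemma (a set of volume $>1$ contains two points differing by a lattice vector), applied to $\tfrac12 K$, followed by the symmetry-and-convexity step producing $x-y\in K\cap(\Z^d\setminus\{0\})$. Two points are worth commending because they are exactly where sloppy write-ups go wrong: you correctly observe that the integer-valued overlap function must exceed $1$ on a set of positive measure (not merely almost everywhere trivialities), and you handle the boundary case $\vol(K)=2^d$ honestly via the nested bodies $(1+\tfrac1n)K$, using that $0\in K$ (forced by central symmetry and convexity) so these sets are genuinely decreasing, and that $K$ is compact so the intersection collapses to $K$ and the integral points $z_n$ take only finitely many values. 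This boundary case is essential here, since the paper's statement uses volume exactly $2^d$ and the paper's definition of convex body requires compactness, which your limiting argument uses in precisely the right way.
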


\section{Ryshkov polyhedra}

\label{sec:ryshkov}

Since the Hermite invariant is invariant with respect to scaling,
a natural approach of maximizing it
is to consider all forms with a fixed 
arithmetical minimum, say~$1$, and minimize the determinant
among them.
We may even relax the condition on the arithmetical minimum
and only require that it is at least~$1$.
In other words, we have
$$
\MH_d
=
1 / \inf_{\MNR} (\det Q)^{1/d}
,
$$
where  
\begin{equation}  \label{eqn:ryshkov-polyhedron}
\MNR =
\left\{
Q\in\sdo : \lambda(Q)\geq 1
\right\}
.
\end{equation}
We refer to $\MNR$ as {\em Ryshkov polyhedron},
as it was Ryshkov \cite{ryshkov-1970}
who noticed that this view
on Hermite's constant allows a simplified 
description of Voronoi's theory.

Because of the fundamental identity
\[
Q[\vec{x}] = \langle Q,\vec{x}\vec{x}^t \rangle 
,
\]
quadratic forms $Q\in \sd$ attaining a fixed value 
on a given $\vec{x}\in\R^d\setminus\{\vec{0}\}$ 
lie all in a {\em hyperplane}
({\em affine subspace of codimension~$1$}).
Thus Ryshkov polyhedra $\MNR$ are intersections of 
infinitely many {\em halfspaces}:   
\begin{equation}  \label{eqn:Plambda}
\MNR = 
\{
Q\in\sdo : \langle Q, \vec{x}\vec{x}^t \rangle \geq \lambda 
\mbox{ for all } \vec{x}\in\Z^d\setminus\{\vec{0}\}
\}
.
\end{equation}

We show below that are $\MNR$ is ``locally like a polyhedron''.
Its {\em vertices} are precisely 
the perfect forms with arithmetical minimum $1$.

{\em Background on polyhedra}.
Before we give the precise statement, and for later purposes, 
we need some basic notions from the theory of polyhedra.
As general references for further reading we recommend
the books \cite{ms-1971}, \cite{shrijver-1986}, 
\cite{ziegler-1998}, \cite{gruenbaum-2002}.
A {\em convex polyhedron}  
$\MP\subseteq \E$ in a {\em Euclidean space} $\E$
with inner product $\langle \cdot , \cdot \rangle$ 
(e.g. $\E= \sd$) 
can be defined by a finite set of linear inequalities
({\em $\mathcal{H}$-description})
$$
\MP=\{ \vec{x}\in \E : \langle \vec{a}_i, \vec{x} \rangle \geq b_i , i=1,\ldots,m\},
$$
with $\vec{a}_i\in \E$ and $b_i\in\R$ for $i=1,\ldots,m$.
If the number of inequalities $m$ in the description 
is minimum, we say it is non-redundant. 
The dimension $\dim \MP$ of $\MP$ is the dimension of the smallest
affine subspace containing it. 
Under the assumption that $\MP$ is full-dimensional 
every inequality $i$ of a non-redundant description defines 
a {\em facet}  
$\{\vec{x}\in \MP : \langle \vec{a}_i, \vec{x} \rangle = b_i\}$ of $\MP$, 
which is a $(d-1)$-dimensional convex polyhedron contained
in the boundary of $\MP$. More generally, an intersection of a hyperplane
with the boundary of $\MP$ is called a {\em face} of $\MP$, if $\MP$ is contained
in one of the two halfspaces bounded by the hyperplane.
The faces are polyhedra themselves; faces of dimension~$0$
and dimension~$1$ are called vertices and edges.

By the Farkas-Minkowski-Weyl Theorem
(see e.g.\ \cite[Corollary 7.1a]{shrijver-1986}),
$\MP$ can also be described by a finite set of generators
({\em {$\mathcal{V}$}-description}):
\begin{eqnarray*}
\MP & = & \conv\{\vec{v}_1,\ldots,\vec{v}_k\} + \cone\{\vec{v}_{k+1},\dots,\vec{v}_n\}  
\\
  & = & \{ \sum_{i=1}^n \lambda_i \vec{v}_i : \lambda_i\geq 0, \sum_{i=1}^k \lambda_i=1\} 
\end{eqnarray*}
where $\vec{v}_i\in \E$ for $i=1,\ldots,n$.
Here $\conv M$ denotes the {\em convex hull} 
and $\cone M$ the {\em conic hull} of a set $M$.
If the number of generators is minimum, the description is again called 
{\em non-redundant}. 
In the non-redundant case, the
generators $\vec{v}_i$, $i=1,\dots,k$, are called {\em vertices}  
and $\R_{\geq 0}\vec{v}_i$, $i=k+1,\dots,n$, are the {\em extreme rays}   
of $\MP$. In case $\MP$ is bounded we have $n=k$ and we speak of 
a {\em convex polytope}.

There exist several different approaches and corresponding 
software for the fundamental task of converting $\mathcal{H}$-descriptions
of polyhedra into $\mathcal{V}$-descriptions and vice versa
(see for example {\tt cdd} \cite{cdd} and {\tt lrs} \cite{lrs}).

{\em Locally finite polyhedra}.
We say that an intersection of infinitely many halfspaces,
$\MP = \bigcap_{i=1}^\infty H^+_i$,
is a {\em locally finite polyhedron}, if the intersection
with an arbitrary polytope is a polytope. So, locally $\MP$
``looks like a polytope''.

\begin{theorem} \label{thm:locally-finite-polyhedron}
For $d\geq 1$, the Ryshkov polyhedron $\MNR$ 
(see \eqref{eqn:Plambda}) is a locally finite polyhedron.
\end{theorem}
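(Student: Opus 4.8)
The plan is to show that for any polytope $\MP \subseteq \sd$, only finitely many of the defining halfspaces $\langle Q, \vec{x}\vec{x}^t\rangle \geq 1$ (with $\vec{x}\in\Z^d\setminus\{\vec{0}\}$) are relevant on $\MP$, i.e. all but finitely many are satisfied by every point of $\MP\cap\MNR$ with strict inequality (equivalently, are implied on $\MP$). Once this is established, $\MP\cap\MNR$ is the intersection of the polytope $\MP$ with finitely many halfspaces, hence a polytope, which is exactly the assertion. First I would reduce to the case where $\MP$ is contained in $\sdo$: if $\MP\cap\MNR$ is nonempty, then $\MP\cap\MNR = \MP\cap\MNR\cap\sdo$ since $\MNR\subseteq\sdo$ by definition, and I can shrink $\MP$ to a subpolytope still containing $\MP\cap\MNR$ but lying inside the open cone $\sdo$ (or simply work with the closed bounded set $\MP\cap\MNR$ directly).

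The key step is a uniform bound. On the polytope $\MP$ (assumed compact and, after the reduction, contained in $\sdo$), the function $Q\mapsto\lambda_{\min}(Q)$, the smallest eigenvalue, attains a positive minimum $\mu>0$ by continuity and compactness; likewise $\|Q\|$ is bounded above by some $M$. For $Q\in\MP$ and $\vec{x}\in\Z^d$ we have $\langle Q,\vec{x}\vec{x}^t\rangle = Q[\vec{x}] \geq \mu\,\|\vec{x}\|^2$. Hence if $\|\vec{x}\|^2 > 1/\mu$, the inequality $Q[\vec{x}]\geq 1$ holds automatically for every $Q\in\MP$, so that halfspace is redundant on $\MP$. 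The number of integral vectors $\vec{x}$ with $\|\vec{x}\|^2 \leq 1/\mu$ is finite (it is the number of lattice points in a ball of fixed radius), so only finitely many halfspaces can actively cut $\MP$. Therefore $\MP\cap\MNR = \MP\cap\bigcap_{\vec{x}\in S} \{Q : \langle Q,\vec{x}\vec{x}^t\rangle\geq 1\}$ for the finite set $S = \{\vec{x}\in\Z^d\setminus\{\vec{0}\} : \|\vec{x}\|^2\leq 1/\mu\}$, an intersection of a polytope with finitely many halfspaces, hence a polytope.

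The main obstacle — really the only subtle point — is making the reduction to a compact subset of $\sdo$ clean, since $\MNR$ itself is not compact and $\sdo$ is merely open; one must be careful that the bound $\mu$ on the smallest eigenvalue is uniform over the relevant region. The cleanest route is to observe that it suffices to prove $\MP\cap\MNR$ is a polytope for every polytope $\MP$ with $\MP\subseteq\sdo$, because an arbitrary polytope $\MP'$ can be covered, together with $\MP'\cap\MNR$, by finitely many such $\MP$ (e.g. intersect $\MP'$ with the closed halfspaces $\langle Q, \vec{x}\vec{x}^t\rangle\geq 1$ for $\vec{x}=\vec{e}_1,\dots,\vec{e}_d$ and with a ball, noting $\MP'\cap\MNR$ already lies in $\sdo$); alternatively, simply replace $\MP$ by the compact convex set $\MP\cap\MNR$ from the outset if it is nonempty, run the eigenvalue argument on it, and conclude. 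Everything else is routine: continuity of eigenvalues, compactness, and finiteness of lattice points in a ball.
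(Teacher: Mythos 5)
Your core mechanism is sound and genuinely different from the paper's: on a compact polytope $\MP\subseteq\sdo$ the smallest eigenvalue has a positive minimum $\mu$, so $Q[\vec{x}]\geq\mu\|\vec{x}\|^2$ kills all constraints with $\|\vec{x}\|^2>1/\mu$ \emph{on all of} $\MP$, and the finitely many surviving constraints cut out a polytope. The paper instead bounds the coordinates of the relevant $\vec{x}$ directly, for all $Q\in\MNR$ with $\trace Q\leq C$, via Minkowski's convex body theorem and a volume comparison with cross-polytopes. Your route avoids that volume computation and is cleaner once you are inside $\sdo$.

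The genuine gap is in the reduction to $\MP\subseteq\sdo$. For an arbitrary polytope $\MP$, sandwiching a polytope between $\MP\cap\MNR$ and the open cone $\sdo$ requires that $\inf\{\lambda_{\min}(Q):Q\in\MP\cap\MNR\}>0$, equivalently that $\MP\cap\MNR$ is closed. This is \emph{not} routine compactness: a bounded set of positive definite forms can accumulate at a degenerate form, and a priori a sequence $Q_n\in\MP\cap\MNR$ could have $\lambda_{\min}(Q_n)\to 0$. Ruling this out is exactly the analytic heart of the theorem, and it needs an input beyond continuity --- e.g.\ Minkowski's convex body theorem (a degenerate positive semidefinite limit $Q_*$ would have $\{\vec{x}:Q_*[\vec{x}]\leq 1/2\}$ of infinite volume, hence would contain a nonzero lattice point, contradicting $Q_*[\vec{x}]\geq 1$), or Hermite's Theorem~\ref{thm:hermite} (which gives $\det Q\geq \MH_d^{-d}$ on $\MNR$, and combined with $\lambda_{\max}(Q)\leq\trace Q\leq C$ yields $\lambda_{\min}(Q)\geq \MH_d^{-d}C^{1-d}$). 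Your "simply work with the closed bounded set $\MP\cap\MNR$" assumes precisely this. Two of your fallback suggestions also fail as stated: intersecting with the halfspaces $Q[\vec{e}_i]\geq 1$ and a ball does not force positive definiteness (consider $q_{11}=q_{22}=1$, $q_{12}$ large), and running the eigenvalue bound only on $\MP\cap\MNR$ itself shows the far constraints are \emph{strictly satisfied there} but not that they are \emph{redundant} for points of $\MP$ outside $\MNR$, so it does not identify $\MP\cap\MNR$ with a finite intersection. With the uniform eigenvalue bound supplied (by either tool above), your argument closes correctly.
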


\begin{proof}
By applying Minkowski's convex body Theorem \ref{thm:minkowski},
we show below that 
\begin{equation} \label{eqn:PlambdaC} 
\MNR \cap \{ Q\in\sd : \trace Q \leq C \}
\end{equation}
is a polytope (possibly the empty set) for every constant $C$. 
This proves the theorem, since
$$\trace Q = \langle Q, \id_d \rangle \leq C$$ 
determines a halfspace containing a bounded section of $\sdo$.

The sets \eqref{eqn:PlambdaC} are polytopes if 
the set of all $\vec{x}\in\Z^d\setminus\{\vec{0}\}$
with $Q[\vec{x}]=1$ (or $Q[\vec{x}]\leq 1$)
for some forms $Q$ in \eqref{eqn:PlambdaC} is finite.
We show below that the absolute value of coordinates 
$$
m=\max_{i=1,\dots, d} | x_i |
$$
of $\vec{x}$ with this property is bounded.

Let $Q$ be a PQF in \eqref{eqn:PlambdaC}.
Then the ellipsoid 
$E(Q, 1)
=\{ \vec{x}\in\R^d : Q[\vec{x}] \leq 1 \}$
does not contain any point of $\Z^d\setminus\{\vec{0}\}$
in its interior. So in particular $\vol E(Q,1) \leq 2^d$
by Minkowski's convex body theorem.
Since
$$
1 \leq Q[\vec{e}_i] 
\leq 
(\trace Q )
-\sum_{j\not= i} Q[\vec{e}_j]
\leq
C - (d-1) 
,
$$
we know that $E(Q,1)$
contains the {\em cross polytope}  \index{cross polytope}
\begin{equation} \label{eqn:crosspoly}
C' \cdot \conv \{ \pm \vec{e}_i 
\; : \; i=1,\dots,d \}
\end{equation}
with 
$$C'=\left(C - (d-1)\right)^{-1/2}.$$

For $\vec{x}$ with $Q[\vec{x}]\leq 1$ consider the polytope
defined as the convex hull 
of $\pm \vec{x}$ and the cross polytope \eqref{eqn:crosspoly}.
It is contained in $E(Q,1)$. On the other hand, this polytope
contains the convex hull $P$ of $\pm \vec{x}$ 
and the $(d-1)$-dimensional cross polytope
$$C' \cdot \conv \{ \pm \vec{e}_i : i=1,\dots,d, i\not=j \},$$
where $j\in\{1,\dots,n\}$ is chosen such that $|x_j|$ attains $m$.
Thus setting $C''$ to be the $(d-1)$-dimensional volume 
of latter $(d-1)$-dimensional cross polytope
we get
$$
m \cdot \frac{2}{d} C'' =
\vol P \leq 
\vol \left( \conv\{\pm \vec{x}, \mbox{\eqref{eqn:crosspoly}}\} \right)
< \vol E(Q,1)\leq 2^d
.
$$
Hence we obtain the desired bound on~$m$ (depending only on $d$).
\end{proof}

One consequence of the Theorem is the fact that
Hermite's constant can only be attained by perfect forms,
which was first observed 
by Korkin and Zolotarev in \cite{kz-1877}.
This follows immediately from the 
following Theorem.

\begin{theorem}[Minkowski~\cite{minkowski-1905}]  \label{thm:concave-det}
$(\det Q)^{1/d}$ is a strictly concave function on~$\sdo$.
\end{theorem}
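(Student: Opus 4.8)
The plan is to reduce the claim, via simultaneous diagonalization, to the superadditivity of the geometric mean. Since $(\det Q)^{1/d}$ is positively homogeneous of degree one, it is affine along every ray $\{\lambda Q:\lambda>0\}$, so here ``strictly concave'' has to be read in the cone-appropriate sense that for $Q_0,Q_1\in\sdo$ and $t\in(0,1)$ one has
\[
\det\bigl((1-t)Q_0+tQ_1\bigr)^{1/d}\;\geq\;(1-t)(\det Q_0)^{1/d}+t(\det Q_1)^{1/d},
\]
with equality precisely when $Q_1=\lambda Q_0$ for some $\lambda>0$. This is the statement I would prove.

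First I would exploit that $Q_0$ is positive definite: choose $A\in\gldr$ with $A^tQ_0A=\id_d$, and then, since $A^tQ_1A$ is symmetric, compose $A$ with an orthogonal matrix so that $A^tQ_1A=\mathrm{diag}(\delta_1,\dots,\delta_d)$ with all $\delta_i>0$. Writing $\alpha=(\det A)^{-2}=\det Q_0$ and using $\det(B^tMB)=(\det B)^2\det M$, one gets $\det Q_1=\alpha\prod_i\delta_i$ and $\det((1-t)Q_0+tQ_1)=\alpha\prod_i\bigl((1-t)+t\delta_i\bigr)$. Dividing the desired inequality by $\alpha^{1/d}>0$ reduces everything to
\[
\Bigl(\prod_{i=1}^d\bigl((1-t)+t\delta_i\bigr)\Bigr)^{1/d}\;\geq\;(1-t)+t\Bigl(\prod_{i=1}^d\delta_i\Bigr)^{1/d}.
\]

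Next I would prove the general fact $\bigl(\prod_i(a_i+b_i)\bigr)^{1/d}\geq\bigl(\prod_i a_i\bigr)^{1/d}+\bigl(\prod_i b_i\bigr)^{1/d}$ for nonnegative reals $a_i,b_i$ (applied with $a_i=1-t$, $b_i=t\delta_i$): assuming all $a_i+b_i>0$, divide by the left-hand side and apply the AM--GM inequality separately to $\prod_i a_i/(a_i+b_i)$ and $\prod_i b_i/(a_i+b_i)$, whose $d$-th roots then sum to at most $\frac1d\sum_i\bigl(a_i/(a_i+b_i)+b_i/(a_i+b_i)\bigr)=1$. For the equality discussion, equality in AM--GM forces all the ratios $a_i/(a_i+b_i)$ to be equal; tracing this back through the reduction forces all $\delta_i$ equal, i.e. $A^tQ_1A=\delta\,\id_d=\delta\,A^tQ_0A$, hence $Q_1=\delta Q_0$.

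The inequality itself is routine once simultaneous diagonalization is in place; the part that needs a little care is the equality analysis together with stating ``strictly concave'' correctly on a cone on which the function is homogeneous — this is really the only subtle point. A variant would be to deduce plain concavity from the concavity of $\log\det$ on $\sdo$, but the diagonalization route delivers the sharp equality condition most directly, and that condition is exactly what is needed for the remark that Hermite's constant is attained only at perfect forms.
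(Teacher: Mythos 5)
Your argument is correct and complete. Note, however, that the paper does not actually prove this theorem: it states it and refers to Gruber--Lekkerkerker for a proof, so there is no in-paper argument to compare against. What you give is essentially the classical proof that the cited reference contains: simultaneous diagonalization of $Q_0,Q_1$ (possible since $Q_0\in\sdo$), reduction to the superadditivity of the geometric mean $\bigl(\prod_i(a_i+b_i)\bigr)^{1/d}\geq\bigl(\prod_i a_i\bigr)^{1/d}+\bigl(\prod_i b_i\bigr)^{1/d}$, and AM--GM applied to the ratios $a_i/(a_i+b_i)$ and $b_i/(a_i+b_i)$; all steps check out, including the determinant bookkeeping $\det Q_1=\alpha\prod_i\delta_i$ with $\alpha=(\det A)^{-2}$. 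Your caveat about the meaning of ``strictly concave'' is a genuine improvement on the bare statement: since $(\det Q)^{1/d}$ is homogeneous of degree one it is affine along rays, so strictness can only mean that equality forces $Q_1=\lambda Q_0$, and your equality analysis (all $\delta_i$ equal, hence $A^tQ_1A=\delta\,\id_d=\delta\,A^tQ_0A$) establishes exactly that. This is also precisely the strength needed for the paper's subsequent claims that $\{Q\in\sdo:\det Q\geq D\}$ is strictly convex and that Hermite's constant is attained only at perfect forms, since two distinct forms on a level set $\det Q=D$ are never proportional.
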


For a proof see for example \cite{gl-1987}.
Note, that in contrast to 
$(\det Q)^{1/d}$, the function $\det Q$ is not 
a concave function on~$\sdo$ (cf. \cite{nelson-1974}).
However Minkowski's theorem implies that the set
\begin{equation} \label{eqn:det-greater-equal-D}
\{Q\in \sdo : \det Q \geq D \}
\end{equation}
is strictly convex for $D>0$.

{\em Finiteness up to equivalence}.
The operation of $\gldz$ on $\sdo$ leaves $\lambda(Q)$, $\Min Q$
and also $\MNR$ invariant.
$\gldz$ acts on the sets of faces of a given dimension,
thus in particular on the sets of vertices, edges and facets of 
$\MNR$.
The following theorem shows that 
the Ryshkov polyhedron $\MNR$ contains only finitely 
many arithmetically inequivalent vertices.
By Theorem \ref{thm:concave-det} this implies in particular 
that $\MH_d$ is actually attained, namely by some perfect forms.

\begin{theorem}[Voronoi 1907] \label{thm:voronoi}
Up to arithmetical equivalence and scaling there exist
only finitely many perfect forms in a given dimension $d\geq 1$.
\end{theorem}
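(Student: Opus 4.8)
The plan is to combine the local finiteness of $\MNR$ (Theorem~\ref{thm:locally-finite-polyhedron}) with a bound that reduces the problem to a compact region of $\sdo$. Recall that a perfect form with arithmetical minimum $1$ is exactly a vertex of $\MNR$, and that scaling lets us normalize the minimum to $1$. So it suffices to show that $\gldz$ acts on the vertex set of $\MNR$ with only finitely many orbits. The key geometric idea is that, by Hermite's Theorem~\ref{thm:hermite} (or the sharper Minkowski bound \eqref{eqn:new-mink-bound}), any $Q\in\MNR$ satisfies $\det Q \geq c_d$ for an explicit constant $c_d>0$ depending only on $d$; equivalently every $Q\in\MNR$ lies in the set \eqref{eqn:det-greater-equal-D} with $D=c_d$.

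First I would invoke the classical reduction theory of Minkowski (or Hermite): every $Q\in\sdo$ is arithmetically equivalent to a \emph{reduced} form, and for a reduced form $Q$ one has the bound $q_{11}q_{22}\cdots q_{dd}\leq \gamma_d \det Q$ together with $q_{11}\leq q_{22}\leq\cdots\leq q_{dd}$ for a constant $\gamma_d$. For $Q\in\MNR$ we have $\lambda(Q)\geq 1$, hence $q_{ii}=Q[\vec e_i]\geq 1$ for all $i$, giving a lower bound on each diagonal entry. On the other hand, since $\MH_d$ is finite we also get $\det Q$ bounded \emph{above} on the normalized slice: any perfect form with $\lambda(Q)=1$ has $\det Q = \lambda(Q)^d/\MH(Q)^d \leq 1/\MH_d^{-d}\cdots$ — more simply, the vertices of $\MNR$ that we care about can be taken with $\det Q$ lying in a fixed compact interval $[c_d, C_d]$, because the interesting perfect forms (those we enumerate up to scaling) have $\lambda=1$, and $\det Q \le (\det Q)$ is controlled since $\MH(Q)=\lambda(Q)(\det Q)^{-1/d}\le \MH_d$ forces $\det Q \ge \MH_d^{-d}$, while a perfect form being a vertex forces it to be arithmetically equivalent to a reduced form whose trace is then bounded by the combination of $q_{dd}\le \gamma_d\det Q/(q_{11}\cdots q_{d-1,d-1})\le \gamma_d\det Q$ and $\det Q\le\cdots$. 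The upshot: every perfect form is equivalent to one lying in a set of the form \eqref{eqn:PlambdaC}, i.e.\ with $\trace Q\leq C$ for a constant $C=C(d)$.

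Now I would finish by applying Theorem~\ref{thm:locally-finite-polyhedron}. By that theorem the set $\MNR\cap\{Q\in\sd:\trace Q\leq C\}$ is a polytope, hence has \emph{finitely many} vertices. Every vertex of $\MNR$ lying in this slice is in particular a vertex of the polytope (a vertex of $\MNR$ that lies in a face-defining region stays a vertex). Since every perfect form (normalized to $\lambda=1$) is $\gldz$-equivalent to a vertex of $\MNR$ inside this slice, and there are only finitely many such vertices, there are only finitely many perfect forms up to arithmetical equivalence and scaling.

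The main obstacle is the second step: making precise the claim that every perfect form is equivalent to one of bounded trace. This rests on Minkowski reduction theory — specifically the inequality $\prod_i q_{ii}\leq \gamma_d\det Q$ for reduced forms together with the lower bounds $q_{ii}\geq 1$ and the upper bound on $\det Q$ coming from finiteness of $\MH_d$ (Theorem~\ref{thm:hermite}). One must be a little careful that a vertex of $\MNR$, after applying a $\gldz$-transformation to reduce it, remains a vertex of the transformed (but identical) polyhedron $\MNR$ and now lies in the bounded slice; this is immediate since $\gldz$ acts on $\MNR$ by affine isometries permuting its faces. An alternative that sidesteps explicit reduction theory: bound $\trace Q$ directly for a perfect form by noting its Minkowski-reduced representative has all successive minima bounded below (by $1$) and the last one bounded above via $\prod\mu_i\asymp\det Q\le \MH_d^{-d}$, then pass to an equivalent form realizing these minima on a bounded basis.
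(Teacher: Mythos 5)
Your overall architecture is exactly the paper's: normalize to $\lambda(Q)=1$, show every perfect form is $\gldz$-equivalent to one with $\trace Q\leq C(d)$, and then invoke Theorem~\ref{thm:locally-finite-polyhedron} to conclude that the slice $\MNR\cap\{\trace Q\leq C\}$ is a polytope with finitely many vertices. The reduction-theory inequality $\prod_i q_{ii}\leq \gamma_d\det Q$ together with $q_{ii}\geq 1$ is also the paper's route. However, there is a genuine gap at the one place where perfection must actually be used: you never establish an \emph{upper} bound on $\det Q$. Finiteness of $\MH_d$ gives, for $\lambda(Q)=1$, the inequality $\det Q=\MH(Q)^{-d}\geq \MH_d^{-d}$, which is a \emph{lower} bound; in your ``alternative'' you write $\det Q\leq \MH_d^{-d}$, which has the inequality reversed and is false in general, and in the main argument the needed bound is left as ``$\det Q\leq\cdots$''. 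Without an upper bound on the determinant the trace bound collapses: the Minkowski-reduced forms $\mathrm{diag}(1,N)$ lie in $\MNR$ and have unbounded trace (of course they are not perfect, which is the point).

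The missing ingredient, supplied in the paper, is that a perfect form $Q$ with $\lambda(Q)=1$ admits $d$ linearly independent vectors $\vec{a}_1,\dots,\vec{a}_d\in\Min Q$: the rank-one forms $\vec{x}\vec{x}^t$ with $\vec{x}\in\Min Q$ must span all of $\sd$ (otherwise the linear system $\langle Q,\vec{x}\vec{x}^t\rangle=\lambda(Q)$ would not determine $Q$ uniquely), and if $\Min Q$ lay in a hyperplane these rank-one forms could span at most a $\binom{d}{2}$-dimensional subspace. Hadamard's inequality $\det Q\leq Q[\vec{a}_1]\cdots Q[\vec{a}_d]=1$ then yields the upper bound $\det Q\leq 1$, and with it $q'_{kk}\leq\prod_i q'_{ii}\leq\gamma_d$ and $\trace Q'\leq d\gamma_d$. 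If you insert this step, your argument closes and coincides with the paper's proof; the appeals to $\MH_d$ and to the set \eqref{eqn:det-greater-equal-D} can then be dropped, as they play no role.
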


\begin{proof}
In the proof of Theorem \ref{thm:locally-finite-polyhedron}
we showed that the set \eqref{eqn:PlambdaC} of PQFs $Q$
with $\lambda(Q)\geq 1$ and $\trace Q\leq C$
is a polytope, hence has only finitely many vertices.
Therefore it suffices to show that every
perfect PQF $Q$ with $\lambda(Q)=1$ (a vertex of the Ryshkov polyhedron $\MNR$)
is arithmetically equivalent to a form with trace smaller 
than some constant depending only on the dimension~$d$.

By Hermite's Theorem~\ref{thm:hermite}
we find an equivalent PQF $Q'$ with
\begin{equation} \label{eqn:prod-qii_prime}
\prod_{i=1}^d q'_{ii} 
\leq 
\left(\frac{4}{3}\right)^{d(d-1)/2} \cdot \det Q'
.
\end{equation}
The determinant $\det Q'=\det Q$ can be bounded by
$1$ because of {\em Hadamard's inequality}
showing
\begin{equation}  \label{eqn:hadamard}
\det Q \leq Q[\vec{a}_1]\cdots Q[\vec{a}_d]
\end{equation}
for $Q\in\sdo$ and linearly independent $\vec{a}_1,\dots, \vec{a}_d\in \Z^d$.
Latter applies in particular to linearly independent vectors in $\Min Q$, 
respectively $\Min Q'$.
The existence of $d$ linear independent vectors in $\Min Q$ for a perfect
form $Q$ follows from the observation that the rank-$1$ forms
$\vec{x}\vec{x}^t$ with $\vec{x}\in\Min Q$ have to span $\sd$,
since they uniquely determine $Q$ through the linear 
equations $\langle Q, \vec{x}\vec{x}^t \rangle = \lambda(Q)$.
If however $\Min Q$ does not span $\R^d$ then these rank-$1$
forms can maximally span a $\binom{d}{2}$-dimensional subspace of $\sd$.

Because of $q'_{ii}\geq 1$ we find 
$$
q'_{kk}\leq \prod_{i=1}^d q'_{ii} \leq \left( \frac{4}{3} \right)^{d(d-1)/2} .
$$
From this we obtain the desired upper bound for the trace of $Q'$:
$$
\trace Q' = \sum_{k=1}^d q'_{kk} 
\leq d \left(\frac{4}{3}\right)^{d(d-1)/2} 
.
$$
\end{proof}

\section{Voronoi's algorithm}

\label{sec:voronoi}

The vertices (perfect PQFs) and edges of $\MNR$   
form the (abstract) {\em Voronoi graph in dimension $d$}.
Two vertices, respectively perfect PQFs $Q$ and $Q'$ 
are connected by an edge if the line segment $\conv\{Q,Q'\}$ 
is an edge of~$\MNR$. In this case we say that $Q$
and $Q'$ are {\em contiguous perfect forms}  
(or {\em Voronoi neighbors}).  \index{Voronoi!\hlp neighbors}
By Theorem \ref{thm:voronoi}, for given $d$, there are only finitely many
vertices (and edges) of the Voronoi graph up to arithmetical equivalence.
Therefore, one can enumerate perfect PQFs (up to arithmetical equivalence and scaling)
by a {\em graph traversal algorithm},  
which is known as 
{\em Voronoi's algorithm} 
(see Algorithm \ref{alg:voronoi-algorithm}).

\begin{bigalg}   \label{alg:voronoi-algorithm}
\fbox{
\begin{minipage}{12.0cm}
\begin{flushleft}
\smallskip
\textbf{Input:} Dimension $d$.\\
\textbf{Output:} A complete list of inequivalent perfect forms in $\sdo$.\\
\smallskip
Start with a perfect form $Q$.\\
\begin{enumerate}
\item[1.]
Compute $\Min Q$ and describing inequalities of polyhedral cone
\begin{equation} \label{eqn:Qcone}
\MP(Q)=\{ Q' \in \sd \; : \; Q'[\vec{x}]\geq 0 \mbox{ for all } \vec{x}\in\Min Q \}
\end{equation}

\item[2.]
Enumerate extreme rays $R_1,\dots, R_k$ of the cone $\MP(Q)$

\item[3.]
Determine contiguous perfect forms
$Q_i=Q+\alpha R_i$, $i=1,\dots,k$

\item[4.]
Test if $Q_i$ is arithmetically equivalent to a known form

\item[5.]
Repeat steps 1.--4. for new perfect forms
\end{enumerate}
\end{flushleft}
\end{minipage}
}
\\[1ex]
{\sc Algorithm \arabic{alg}.} Voronoi's algorithm.
\end{bigalg}

As an initial perfect form we may for example choose
{\em Voronoi's first perfect form},
\index{Voronoi!\hlp's first perfect form}
\index{perfect form!Voronoi's first \hlp}
which is associated to the {\em root lattice $\mathsf{A}_d$}.   \index{root lattice}
For example take $Q_{\mathsf{A}_d}=(q_{i,j})_{1\leq i,j\leq d}$
with $q_{i,i}=2$, $q_{i,i-1}=q_{i-1,i}=-1$ and $q_{i,j}=0$ otherwise
(see \cite[Section~6.1]{cs-1998} or \cite[Section~4.2]{martinet-2003}).

One key ingredient, not only for step~1., 
is the computation of representations of the arithmetical minimum.
For it we may use the {\em Algorithm of Fincke and Pohst}
\index{algorithm!\hlp of Fincke and Pohst}
\index{Fincke and Pohst algorithm}
(cf. \cite{cohen-1993}):
Given a PQF $Q$, it allows to compute all $\vec{x}\in\Z^d$ with
$Q[\vec{x}]\leq C$ for some constant $C>0$. 
For $C=\min_{i=1,\dots,d} q_{ii}$ a 
non-zero integral vector $\vec{x}$ with $Q[\vec{x}]\leq C$ exists,
hence in particular $\lambda(Q)\leq C$.  
The Fincke and Pohst algorithm makes use of 
the {\em Lagrange expansion}  
of $Q$, given by
\begin{equation} \label{eqn:lagrange_expansion}
Q[\vec{x}] = \sum_{i=1}^d A_i \left( x_i -\sum_{j=i+1}^d \alpha_{ij} x_j\right)^2,
\end{equation}
with unique positive {\em outer coefficients}
\index{outer!\hlp coefficient}  \index{coefficient!outer \hlp}
$A_i$ and {\em inner coefficients}
\index{inner coefficient}  \index{coefficient!inner \hlp}
$\alpha_{ij}\in\R$, for
$i=1,\dots,d$ and $j=i+1,\dots,d$.
By it, it is possible to restrict the search to 
integral vectors $\vec{x}$ with 
$$
\left|
x_i - \sum_{i=1}^d \alpha_{ij} x_j
\right|
\leq \sqrt{\frac{C}{A_i}}
$$
for $i=d,\dots,1$. Here, the bound on the coordinate $x_i$
depends on fixed values of $x_{i+1},\dots, x_d$, for which we have 
only finitely many possible choices.
Implementations are provided in 
computer algebra systems like \texttt{Magma} \cite{magma} 
or \texttt{GAP} \cite{gap}
(see also \texttt{shvec} by Vallentin \cite{shvec-1999}).

For step~2., observe that  
the homogeneous cone \eqref{eqn:Qcone} is a translate of the 
{\em support cone}  
$$
\{ Q' \in \sd \; : \; Q'[\vec{x}]\geq Q[\vec{x}] \mbox{ for all } \vec{x}\in\Min Q \}
$$
of $Q$ at $\MNR$.
Having its {$\mathcal{H}$}-description
(by linear inequalities)
we can transform it to its {$\mathcal{V}$}-description
and obtain its extreme rays.
The extreme rays $R$ provided by $Q$ through \eqref{eqn:Qcone} 
are easily seen to be indefinite quadratic forms 
(see \cite{martinet-2003}).

In step~3.,
the contiguous perfect forms (Voronoi neighbors) 
of $Q$ are of the form $Q + \rho R$, where $\rho$ is the
smallest positive number such that $\lambda(Q + \rho R) = \lambda$ and 
$\Min(Q + \rho R) \not \subseteq \Min Q$. 
It is possible to determine $\rho$, for example with
Algorithm \ref{alg:determination-of-voronoi-neighbors}:

\begin{bigalg}   \label{alg:determination-of-voronoi-neighbors}
\fbox{
\begin{minipage}{12.0cm}
\begin{flushleft}
\smallskip
\textbf{Input:} A perfect form $Q\in\sdo$ and an extreme ray $R$ of \eqref{eqn:Qcone}\\
\textbf{Output:} $\rho>0$ with $\lambda(Q + \rho R) = \lambda(Q)$ and 
$\Min(Q + \rho R) \not \subseteq \Min Q$.\\
\smallskip
$(l, u) \leftarrow (0,1)$\\
\smallskip
\textbf{while} $Q + u R \not\in \sdo$ or $\lambda(Q + u R) = \lambda(Q)$ \textbf{do}\\
\hspace{2ex} \textbf{if} $Q + u R \not\in \sdo$ \textbf{then} $u \leftarrow (l + u)/2$\\
\hspace{2ex} \textbf{else} $(l,u) \leftarrow (u, 2u)$\\
\hspace{2ex} \textbf{end if}\\
\textbf{end while}\\
\smallskip
\textbf{while} $\Min(Q + l R) \subseteq \Min Q$ \textbf{do}\\
\hspace{2ex} $\gamma \leftarrow \frac{l + u}{2}$\\
\hspace{2ex} \textbf{if} $\lambda(Q + \gamma R) \geq \lambda(Q)$ \textbf{then} $l \leftarrow \gamma$\\
\hspace{2ex} \textbf{else} \\
\hspace{5ex}$u \leftarrow \min \left\{ (\lambda(Q)-Q[\vec{v}])/R[\vec{v}] : 
                             \vec{v}\in \Min (Q+\gamma R), R[\vec{v}] < 0 \right\} \cup \{ \gamma \} $\\
\hspace{2ex} \textbf{end if}\\
\textbf{end while}\\
\smallskip
$\rho \leftarrow l$
\end{flushleft}
\end{minipage}
}
\\[1ex]
{\sc Algorithm \arabic{alg}.} Determination of Voronoi neighbors.
\end{bigalg}

In phase I (first {\tt while} loop), the procedure determines lower and upper bounds $l$ and $u$ for
the desired value $\rho$, such that $Q+ l R, Q + u R \in \sdo$ with
$\lambda(Q+ l R)=\lambda$ and $\lambda(Q + u R)<\lambda$.
In phase II, the value of $\rho$ is determined. 
Note that replacing the assignment of $u$ by
the simpler assignment
$u \leftarrow \gamma$ corresponds to a binary search coming
at least arbitrarily close to $\rho$. However, it may never 
reach the exact value.

For step~4. observe,
that based on an algorithm to compute short vectors (for example the one by Fincke-Pohst described above), 
it is possible to test algorithmically 
if two PQFs $Q$ and $Q'$ are arithmetically
equivalent. That is, because the existence of $U\in\gldz$
with $Q'=U^t Q U$ implies
$$q'_{ii}= Q'[\vec{e}_i] = Q[\vec{u}_i].$$
Hence for the $i$-th column $\vec{u}_i$ of $U$ we have only finitely 
many choices. This idea, 
but more sophisticated, is implemented in
\texttt{isom} by Plesken and Souvignier \cite{ps-1997}, which is also
part of \texttt{Magma} \cite{magma} and \texttt{Carat} \cite{carat}.
Note that isometry tests for perfect forms can be simplified,
because it suffices to find a $U\in\gldz$ with 
$U\Min Q' = \Min Q$.

Using the described software tools it is possible to
verify the results of Table~\ref{tab:num-perf-forms} below on any standard PC up to dimension~$6$.
Note however, that this computation was already done without a computer by Barnes \cite{barnes-1957a}.
In dimension~$7$ and beyond the explained procedure has a seemingly insuperable ``bottleneck'': 
The enumeration of extreme rays for support cones with many facets,
respectively for perfect forms with large sets $\Min Q$.

\begin{table} 
\begin{tabular}{c|c|c|c}
$d$ & \# perf. forms & \# ext. forms & author(s)\\
\hline 
$2$ & $1$ & $1$ & Lagrange, 1773, \cite{lagrange-1773} \\
$3$ & $1$ & $1$ & Gau\ss, 1840, \cite{gauss-1840}\\
$4$ & $2$ & $2$ & Korkin \& Zolotarev, 1877, \cite{kz-1877}\\
$5$ & $3$ & $3$ & Korkin \& Zolotarev, 1877, \cite{kz-1877}\\
$6$ & $7$ & $6$ & Barnes, 1957, \cite{barnes-1957a}\\
$7$ & $33$ & $30$ & Jaquet-Chiffelle, 1993, \cite{jaquet-1993}\\[0.1cm]
$8$ & $10916$ & $2408$ & Dutour Sikiri\'c, Sch\"urmann \& Vallentin\\  
                         
$9$ & $> 500000$ &  &    2005, \cite{latgeo-2005},\cite{dsv-2007b}, cf. \cite{riener-2006}\\
\end{tabular}
\medskip
\caption{Known numbers of perfect and extreme forms.}
\label{tab:num-perf-forms}
\end{table}

There have been several attempts of using computers to (try to) enumerate
perfect forms. Larmouth \cite{larmouth-1971} was the first who 
implemented it and was able to verify the result of Barnes \cite{barnes-1957a} 
up to dimension $6$. Also, Stacey \cite{stacey-1975} and Conway and Sloane \cite{cs-1988a} 
used computer assistance for their attempts to classify the perfect forms in dimension $7$.
Exploiting symmetries, Jaquet-Chiffelle \cite{jaquet-1993} was able
to enumerate all perfect forms in dimension $7$.
Recently, together with Mathieu Dutour Sikiri\'c and Frank Vallentin
we were able to finish the classification in dimension~$8$
(see \cite{latgeo-2005} and \cite{dsv-2007b}).

\section{Eutaxy and Exremality}

\label{sec:extremality}

Not every perfect form is extreme, hence gives a local maximum of
the Hermite invariant, as shown in Table~\ref{tab:num-perf-forms} from
dimension~$6$ onwards.

In order to characterize extreme forms
the notion of {\em eutaxy} is used:
A PQF $Q$ is called {\em eutactic},   
\index{quadratic form!eutactic \hlp}
\index{eutactic!\hlp quadratic form}
if its inverse $Q^{-1}$ is contained in the (relative) interior 
$\relint \MV(Q)$ of its {\em Voronoi domain}
$$
\MV(Q)
= \cone \{ \vec{x}\vec{x}^t : \vec{x}\in\Min Q \}
.
$$
Note that the Voronoi domain is full-dimensional 
if and only if $Q$ is perfect.
Note also that the rank-$1$ forms $\vec{x}\vec{x}^t$ give
inequalities $\langle Q, \vec{x}\vec{x}^t \rangle \geq 1$ 
defining the Ryshkov polyhedron and by this the 
Voronoi domain of $Q$ is equal to the {\em normal cone}
\begin{equation} \label{eqn:normal-cone}
\{N \in \sd : \langle N, Q/\lambda(Q) \rangle \leq \langle N , Q' \rangle \mbox{ for all } Q'\in \MNR \}
\end{equation}
of $\MNR$ at $Q/\lambda(Q)$.

Algebraically the eutaxy condition $Q^{-1} \in \relint \MV(Q)$
is equivalent to the existence
of positive $\alpha_{\vec{x}}$ with 
\begin{equation} \label{eqn:eutaxy-algebraic}
Q^{-1} = \sum_{\vec{x}\in\Min Q} \alpha_{\vec{x}} \vec{x}\vec{x}^t
.
\end{equation} 
Computationally, eutaxy of $Q$ can be tested by solving the {\em linear program}  \index{linear!\hlp program}
\begin{equation} \label{eqn:eutaxy-lp}
\max \alpha_{\min}
\quad \mbox{s.t. $\alpha_{\vec{x}}\geq \alpha_{\min}$ and \eqref{eqn:eutaxy-algebraic} holds.}
\end{equation}
The form $Q$ is eutactic, if and only if 
the maximum is greater $0$.

Voronoi \cite{voronoi-1907} showed that perfectness, together with
eutaxy implies extremality and vice versa.
(Eutaxy alone does not suffice for extremality.)
By solving the linear program~\eqref{eqn:eutaxy-lp}
for perfect forms a list of extreme forms
can be obtained.  
This was done by Riener \cite{riener-2006} for the
$8$~dimensional perfect forms, showing that only 
$2408$ of them are extreme (see Table~\ref{tab:num-perf-forms}).

Geometrically the characterization of extreme forms by Voronoi
can easily be seen from the identity
\begin{equation}  \label{eqn:gradient-det}
\grad \det Q = (\det Q)Q^{-1}
\end{equation}
for the gradient of $\det Q$. By it, the
tangent hyperplane $T$ in $Q$
of the smooth {\em determinant-$\det Q$-surface}  
$$
S = 
\{
Q'\in\sdo : \det Q' = \det Q
\}
$$
is given by
$$
T = 
\{
Q'\in\sd :
\langle Q^{-1},Q'\rangle
=
\langle Q^{-1},Q \rangle
\}
.
$$

Or in other words,
$Q^{-1}$ is a normal vector of 
the tangent plane $T$
of $S$ at $Q$. By Theorem \ref{thm:concave-det}  
the surface $S$ is contained in the halfspace
\begin{equation} \label{eqn:tangent-plane}
\{Q'\in\sd : \langle Q^{-1} , Q'-Q \rangle \geq 0 \}
,
\end{equation}
with $Q$ being the unique intersection point of $S$ and $T$.

As a consequence,
a perfect form $Q$ with $\lambda(Q)=1$ attains a local minimum of $\det Q$
(hence is extreme) if and only if the halfspace \eqref{eqn:tangent-plane}
contains the Ryshkov polyhedron $\MNR$, and its boundary meets $\MNR$
only in $Q$. This is easily seen to be equivalent to the condition that
the normal cone (Voronoi domain) $\MV(Q)$ of $\MNR$ at $Q$
contains $Q^{-1}$ in its interior.

\section{Automorphism groups} 
\label{sec:symmetries}

The recent enumeration success in dimension~$8$ was previously not possible,
because the computation of extreme rays was in particular 
difficult for the support cones associated to the highly symmetric forms
associated to the root lattices $\mathsf{E}_7$ and $\mathsf{E}_8$. 
Note that the enumeration of extreme rays is a known difficulty 
in many problems, for example in combinatorial optimization.
Martinet stated that 
``it seems plainly impossible to classify $8$-dimensional perfect lattices''
(see \cite[p.218]{martinet-2003}).
However, it is possible to overcome these difficulties 
to some extend by exploiting symmetries in the computation.
For a survey on such symmetries exploiting techniques
we refer to \cite{bds-2007}.

In general the  {\em automorphism group}  
(or {\em symmetry group}) 
of a quadratic form $Q\in \sd$, is defined by
$$   \label{not:autom_of_form}
\Aut Q = \{ U \in \gldz : U^t Q U = Q \}
.
$$
As in the case of arithmetical equivalence, 
we can determine $\Aut Q$, based on the knowledge of all vectors
$\vec{u}\in\Z^d$ with $Q[\vec{u}]=q_{ii}$ for some $i\in\{1,\dots,d\}$.
Again, {\tt Magma} \cite{magma}, based on an implementation of 
Plesken and Souvignier (also available in {\tt Carat} \cite{carat}), 
provides a function for this task.

For $Q\in\sdo$ with $\lambda(Q)=1$, the support cone $\MP(Q)$ at $Q$ 
of the Ryshkov polyhedron $\MNR$ (see \eqref{eqn:Qcone}) and its dual,
the Voronoi domain $\MV(Q)$, inherit every symmetry of $Q$.
That is, for all $U\in\Aut Q$ we have 
$$
U^t \MP(Q) U = \MP(Q)
\qquad
\mbox{and}
\qquad
U^t \MV(Q) U = \MV(Q)
.
$$

The automorphism group of a PQF $Q$ is always finite. 
On the other hand, for every finite subgroup $G$ of $\gldz$,
there exists a PQF $Q$ with $G\subseteq \Aut Q$.
For example, given an arbitrary $Q'\in \sdo$, the PQF
$$
Q=\sum_{U\in G} U^t Q' U 
$$
is invariant with respect to $G$, hence satisfies
$G \subseteq \Aut Q$.

For a finite group $G\subset \gldz$, 
the {\em space of invariant quadratic forms}
\index{space of invariant quadratic forms}
\begin{equation} \label{eqn:space-of-invariant-forms}
T_G = \left\{ Q\in\sd : U^t Q U = Q \mbox{ for all } U \in G \right\}
\end{equation}
is a linear subspace of $\sd$;
$T_G\cap \sdo$ is called {\em Bravais manifold}  \index{Bravais!\hlp manifold} 
of $G$.

\section{$T$-perfect forms}

\label{sec:Tperfect}

Since the enumeration of all perfect forms 
becomes practically impossible in higher dimensions
(due to the complexity of the Ryshkov polyhedron $\MNR$),
it is natural to restrict classifications to 
certain Bravais manifolds. This is in particular
motivated by the fact that all forms known to attain
the Hermite constant have large symmetry groups.

Within $T_G$ we are lead to the theory of
{\em $G$-perfect forms}  
of Berg\'e, Martinet and Sigrist \cite{bms-1992}.
It generalizes to a theory of {\em $T$-perfect forms},  
where $T\subseteq \sd$ is some linear subspace (see \cite{martinet-2003}).
Suitable linear subspaces $T$ allow systematic 
treatments of important classes of forms.
Examples are {\em Eisenstein}, {\em Gaussian} and 
{\em Hurwitz quaternionic forms} as explained in Section~\ref{sec:examples}.
For further informations on 
classes as {\em cyclotomic forms} or forms having a
fixed section we refer to \cite{sigrist-2000} and \cite{martinet-2003}.

Our viewpoint developed in this article (based on Ryshkov polyhedra) 
allows a straightforward description of the ``$T$-theory''.
Given a linear subspace $T\subseteq \sd$ we simply consider 
the intersection 
\begin{equation}  \label{eqn:locally-finite-T-sections}
\MNR \cap T
.
\end{equation}
It is again a locally finite polyhedron which we call
a Ryshkov polyhedron too.
Its vertices are called {\em $T$-perfect forms}. 
In case $T=T_G$, where $G$ is a finite subgroup 
we speak of {\em $G$-perfect forms}. 
One should be aware that in general, 
$T$-perfectness does not imply perfectness.

We have to modify the notion of equivalence.
Two PQFs $Q$ and $Q'$ are called 
{\em $T$-equivalent}  
if there exists a $U\in \gldz$ with $Q'=U^tQU$ and $U^t T U \subseteq T$.
Latter condition is sufficient to guarantee equality $U^t T U = T$.
If $T$ is given by a set of generating quadratic forms or inequalities,
we can easily check computationally if this condition is satisfied.
The same is true for the computation of 
{\em $T$-automorphisms} of~$Q$, 
which are given by all $U\in\gldz$ with $Q=U^tQU$ and $U^t T U \subseteq T$.

In contrast to the classical theory,
finiteness of $T$-perfect forms up to $T$-equivalence may be lost (cf. \cite{js-1994}).  
However, although possibly not finishing in finitely many steps, 
we can generalize Voronoi's algorithm to a graph traversal search
of $T$-equivalent $T$-perfect forms.
Here two $T$-perfect forms are called $T$-contiguous
if they are connected by an edge of the Ryshkov polyhedron $\MNR\cap T$.

In case of $T=T_G$, there exists only 
finitely many $G$-perfect forms
up to scaling and {\em $G$-equivalence}
due to a theorem of Jaquet-Chiffelle \cite{jaquet-1995}.
So in this case we obtain a Voronoi algorithm and
have the possibility to enumerate (in principle)
all $G$-perfect forms up to $G$-equivalence.

In general, we can apply the procedure described in
Algorithm~\ref{alg:voronoi-T-algorithm}
with respect to some given linear subspace~$T$.
If the computation finishes, we have a proof that there
exist only finitely many $T$-inequivalent $T$-perfect forms.

\begin{bigalg}   \label{alg:voronoi-T-algorithm}
\fbox{
\begin{minipage}{12.0cm}
\begin{flushleft}
\smallskip
\textbf{Input:} Dimension $d$ and a linear subspace $T$ of $\sd$.\\
\textbf{Output:} A complete list of $T$-inequivalent $T$-perfect forms in $\sdo\cap T$.\\
\smallskip
Start with a $T$-perfect form $Q$.\\
\begin{enumerate}
\item[1.]
Compute $\Min Q$ and describing inequalities of polyhedral cone
\begin{equation} \label{eqn:Qcone2}
\MP_T(Q)=\{ Q' \in T \; : \; Q'[\vec{x}]\geq 0 \mbox{ for all } \vec{x}\in\Min Q \}
\end{equation}

\item[2.]
Enumerate extreme rays $R_1,\dots, R_k$ of the cone $\MP_T(Q)$

\item[3.]
For indefinite $R_i$, $i=1,\dots,k$,
determine $T$-contiguous $T$-perfect forms
$Q_i=Q+\alpha R_i$

\item[4.]
Test if $Q_i$ is $T$-equivalent to a known form

\item[5.]
Repeat steps 1.--4. for new $T$-perfect forms
\end{enumerate}
\end{flushleft}
\end{minipage}
}
\\[1ex]
{\sc Algorithm \arabic{alg}.} Voronoi's algorithm with respect to a linear subspace $T$.
\end{bigalg}

There are a few differences to Voronoi's Algorithm~\ref{alg:voronoi-algorithm}.
One phenomenon that does not occur in the classical
theory is the possible existence of {\em dead ends}.
These occur at $T$-perfect forms $Q$,
whenever one of the extreme rays $R$ of $\MP_T(Q)$ (as in \eqref{eqn:Qcone2}) 
is positive semidefinite. In this case there is no
$T$-contiguous $T$-perfect form on the ray $\{ Q+\alpha R : \alpha>0\}$.
In fact, the ray is in this case contained in an unbounded face of
the Ryshkov polyhedron $\MNR$.

Another difference to the classical algorithm is that usually 
we do not know a starting $T$-perfect form a priori.
We can however find such a form starting from an initial PQF $Q_0$ in $T$
by applying an adapted version 
of Algorithm~\ref{alg:determination-of-voronoi-neighbors}:
We first compute a maximal linear subspace $L_0$ in $\MP_T(Q_0)$ (as in \eqref{eqn:Qcone2}).
If it is trivial, $Q_0$ is perfect.
Otherwise we choose a form $R$ in $L_0$ which is not positive semidefinite.
We then can
apply Algorithm~\ref{alg:determination-of-voronoi-neighbors}
to $Q=Q_0$ and $R$ and 
obtain a $\rho>0$ such that $Q_1=Q_0 + \rho R$ satisfies
$\lambda(Q_1) = \lambda(Q_0)$ and 
$\Min Q_0 \subset \Min Q_1 \not \subseteq \Min Q_0$.
The maximal linear subspace $L_1$ in $\MP_T(Q_1)$ 
is strictly contained in $L_0$.
By applying this procedure at most $\dim T$ times, we
obtain a $T$-perfect form $Q$.

Note that our viewpoint on $T$-perfect forms
in this article differs from the usual one:
$T$-perfect and $G$-perfect forms 
are usually defined via {\em normal cones}  
of faces of $\MNR \cap T$ in $T$
(cf. \cite{martinet-2003}, \cite{bms-1992},
\cite{jaquet-1995}, \cite{opgenorth-1995} and \cite{opgenorth-2001}).
A face $F$ of $\MNR$ is uniquely characterized by the
set 
$$
\Min F = \{ \vec{x} \in \Z^d : Q[\vec{x}]=1 \mbox{ for all } Q\in F \}
.
$$
The normal cone of $F$ is the Voronoi domain 
$\cone \{ \vec{x}\vec{x}^t : \vec{x} \in \Min F \}$
and the normal cone of the face $F \cap T$ in $T$
is obtained by an orthogonal projection of this Voronoi domain onto $T$.
If different inner products are used, the resulting
cones may differ, as seen in the cases of \cite{jaquet-1995} and
\cite{opgenorth-1995}.

\section{Eisenstein, Gaussian and Hurwitz quaternionic perfect forms}

\label{sec:examples}

As examples for the $G$-theory described 
in the previous section, we consider three cases that 
have been studied intensively before.

{\em Eisenstein forms}.
If $d$ is even, then a $Q\in\sdo$ is said to be an {\em Eisenstein form} 
if it is invariant with respect to a group $G\subset \gldz$ of order~$3$
acting fixed-point-free on $\Z^d\setminus\{\vec{0}\}$ by $z\mapsto Uz$. 
For example 
$$
G=\left\langle \id_{d/2} \otimes \begin{pmatrix} 0 & -1 \\ 1 & -1 \end{pmatrix} \right\rangle
,
$$ 
where $\otimes$ denotes the {\em Kronecker product}.
The terminology comes from the fact that 
a corresponding lattice $L\subset \R^d$ can be viewed as a
{\em complex lattice} of dimension $d/2$ over the {\em Eisenstein integers}
$$
\mathcal{E}= \left\{ a+ b e^{2\pi i/3} : a,b \in \Z \right\}
,
$$
that is, $L=B\mathcal{E}^{d/2}\subset \C^{d/2}$ with a suitable $B\in\GL_{d/2}(\C)$.
On the other hand, it can be seen that each 
complex lattice of this form yields an Eisenstein form.

It turns out that the space of $G$-invariant forms
$T_G$ has dimension $(d/2)^2$.
In particular for $d=2$ we find only one Eisenstein form up to scaling,
associated to the {\em hexagonal lattice} $\mathsf{A}_2$. 
It is trivially $\mathcal{E}$-perfect ({\em Eisenstein perfect}). 
From dimension~$4$ on the situation
is already more interesting. 
In Table~\ref{tab:eisenstein} we list number of classes 
and maximum sphere packing densities of $\mathcal{E}$-perfect 
forms up to dimension~$10$.
Figure~\ref{fig:eisenstein} shows the found
contiguities up to dimension~$8$

\begin{table} 
\begin{tabular}{c|c|c|c|c|c}
$d$  &  $2$  &  $4$  &  $6$  &  $8$  &  $10$  \\
\hline
\hline
$\dim T_{\mathcal{E}}$ & $1$ & $4$ & $9$ & $16$ & $25$ \\
\hline
\# $\mathcal{E}$-perf. forms &  $1$  &  $1$  &  $2$  &  $5$  &  $1628$  \\   \hline 
maximum $\delta$  &  $0.9069\ldots$  &  $0.6168\ldots$  &  $0.3729\ldots$  &  $0.2536\ldots$  &  $0.0360\ldots$  \\
\end{tabular}
\medskip
\caption{Number and maximum densities of $\mathcal{E}$-perfect forms.}
\label{tab:eisenstein}
\end{table}

For $d=4$, the Ryshkov polyhedron is $4$-dimensional in $\sd$
(which itself has dimension $10$).
Up to $\mathcal{E}$-equivalence 
(by mappings $Q\mapsto U^t Q U$ preserving $T_G$),
there is only one $\mathcal{E}$-perfect form, 
namely the one associated to the lattice $\mathsf{D}_4$.
Consequently the Voronoi graph (up to $\mathcal{E}$-equivalence)
is just a single vertex with a loop.
In dimension~$6$, we find already two 
$\mathcal{E}$-inequivalence $\mathcal{E}$-perfect forms,
associated to the lattices $\mathsf{E}_6$ and its dual $\mathsf{E}^{\ast}_6$.

\begin{figure} 
\includegraphics[width=10cm]{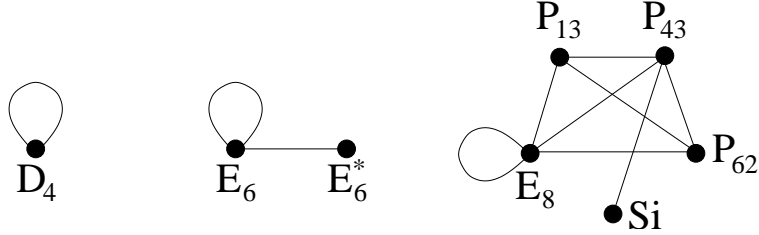}
\caption{Voronoi graphs for $\mathcal{E}$-perfect forms for $d=4,6,8$.}
\label{fig:eisenstein}
\end{figure}

The classification of Eisenstein forms in dimension~$8$ was almost finished
by Sigrist in \cite{sigrist-2004}. 
He found all five classes of $\mathcal{E}$-perfect forms
and their neighboring relations.
However, he could not rule out the existence of other 
{\em $\mathcal{E}$-contiguous} neighbors   
of the forms associated to~$\mathsf{E}_8$.
Recently we finished the classification using a C++-implementation of
the algorithms described in Sections~\ref{sec:voronoi} and \ref{sec:Tperfect}. 
The forms labeled $\mathsf{P}_{13}$, $\mathsf{P}_{43}$ and $\mathsf{P}_{62}$
in Figure~\ref{fig:eisenstein}
are also perfect forms in the classical sense. The index of the labels
corresponds to the number of the class given in the complete list of 
$8$-dimensional perfect forms that can be obtained from our
webpage.\footnote{
see {\footnotesize \url{http://fma2.math.uni-magdeburg.de/~achill/perfect-forms-dim8.txt}}}
The lattice associated to $\mathsf{P}_{62}$ 
is also known as {\em Barnes lattice} $\mathsf{L}_8$
(see \cite[Section 8.4]{martinet-2003}).
The ``Sigrist form'' labeled $\mathsf{Si}$ is an example 
of an $\mathcal{E}$-perfect form 
which is not perfect in the classical sense 
(as already observed in \cite{sigrist-2004}).

Using our implementation we were also able to enumerate all
$10$-dimensional $\mathcal{E}$-perfect forms, showing that their total 
number ``explodes'' to $1628$. 
The data of our classification
can be obtained from our webpage.\footnote{
see {\footnotesize \url{http://fma2.math.uni-magdeburg.de/~achill/E-perfect-forms-dim??.txt}}
where ?? should be replaced by $4$, $6$, $8$ or $10$.} 
The files contain a complete description of the Voronoi graph.

Note that the largest known lattice sphere packing density $\delta$
is attained among $\mathcal{E}$-perfect forms up to dimension~$10$.
A noteworthy phenomenon that occurs among these forms in dimension $10$ is
the existence of $\mathcal{E}$-inequivalent $\mathcal{E}$-perfect forms, 
which are nevertheless arithmetically equivalent. 
This happens for two arithmetically equivalent
forms associated to the lattice $K'_{10}$ (see \cite[Section 8.5]{martinet-2003}).

{\em Gaussian forms}.
For even $d$, a {\em Gaussian form} $Q\in\sdo$ is defined 
as a form containing 
a group $G\subset \gldz$ of order~$4$ in their
automorphism group acting fixed-point-free on $\Z^d\setminus\{\vec{0}\}$.
For example 
$$
G=\left\langle \id_{d/2} \otimes \begin{pmatrix} 0 & -1 \\ 1 & 0 \end{pmatrix} \right\rangle
.
$$ 
A corresponding lattice $L\subset \R^d$ can be viewed as a
complex lattice of dimension $d/2$ over the {\em Gaussian integers}
$$
\mathcal{G}= \left\{ a+ b i : a,b \in \Z \right\}
.
$$
Vice versa, every such lattice yields a Gaussian form.

\begin{figure} 
\includegraphics[width=9cm]{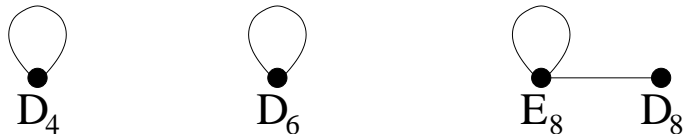}
\caption{Voronoi graphs for $\mathcal{G}$-perfect forms for $d=4,6,8$.} 
\label{fig:gaussian}
\end{figure}

As in the case of Eisenstein forms, 
it turns out that $T_G$ has dimension $(d/2)^2$.
For $d=2$ we find only one $\mathcal{G}$-perfect ({\em Gaussian perfect}) form 
up to scaling, namely $\Z^2$.  
As shown in Figure~\ref{fig:gaussian}, 
the only $\mathcal{G}$-perfect forms in dimension~$6$ and $8$
are associated to the lattices 
$\mathsf{D}_6$, $\mathsf{D}_8$ and $\mathsf{E}_8$.
As shown in Table~\ref{tab:gaussian} the number of equivalence classes 
$\mathcal{G}$-perfect forms 
in dimension~$10$ grows even beyond the corresponding number for
$\mathcal{E}$-perfect forms. 
So far we were not able to finish the classification,
but we think it is computationally within reach on a suitable computer.

\begin{table} 
\begin{tabular}{c|c|c|c|c|c}
$n$  &  $2$  &  $4$  &  $6$  &  $8$  &  $10$  \\
\hline\hline
$\dim T_{\mathcal{G}}$ & $1$ & $4$ & $9$ & $16$ & $25$ \\
\hline
\# $\mathcal{G}$-perf. forms  &  $1$  &  $1$  &  $1$  &  $2$  &  $\geq 17757$  \\
\hline
maximum $\delta$  &  $0.7853\ldots$  &  $0.6168\ldots$  &  $0.3229\ldots$  &  $0.2536\ldots$  & 
\end{tabular}
\medskip
\caption{Number and maximum densities of $\mathcal{G}$-perfect forms.}
\label{tab:gaussian}
\end{table}

As in the case of $\mathcal{E}$-perfect forms, 
the enumeration in dimension~$8$
was started by Sigrist \cite{sigrist-2004}.
However, he did not finish the
classification of {\em $\mathcal{G}$-contiguous} 
$\mathcal{G}$-perfect neighbors 
of $\mathsf{E}_8$.
Nevertheless,
our computations show that his list was nevertheless complete.
The data of our classification
can be obtained from our webpage.\footnote{
see {\footnotesize \url{http://fma2.math.uni-magdeburg.de/~achill/G-perfect-forms-dim??.txt}}
where ?? should be replaced by $4$, $6$ or $8$.} 
Note that in dimensions not divisible by $4$, the
forms giving the densest known lattice sphere packing are not Gaussian.

{\em Hurwitz quaternionic forms}.
For $d$ divisible by $4$, 
a form $Q\in\sdo$ is called {\em Hurwitz quaternionic}
if it is invariant with respect to a group $G\subset \gldz$
isomorphic to $2A_4$ and acting fixed-point-free on 
$\Z^d\setminus\{\vec{0}\}$.
Here, $A_4$ denotes the {\em alternating group} of degree~$4$.
There is a correspondence between Hurwitz quaternionic forms
and lattices in $\R^d$ which can be viewed as
{\em Hurwitz quaternionic lattices} over the 
{\em Hurwitz quaternionic integers}
$$
\mathcal{H}
=
\left\{a + b i + c j + d k : a,b,c,d \in \Z \mbox{ or } a,b,c,d \in \Z+\tfrac{1}{2} \right\}
.
$$
We refer to \cite[Section 2.6]{cs-1998} for details.
%Here $i^2=j^2=k^2=-1$ and $ij=-ji=k, jk=-kj=i, ki=-ik=j$
%are the defining relations of the {\em Quaternion skew field}.
%$\mathbb{H}=\{w + x i + y j + z k : w,x,y,z\in\R\}$.
%An $\mathcal{H}$-lattice is the set of all linear combinations
%$h_1 \vec{a}_1 + \ldots + h_1 \vec{a}_$ ...
%NOTE THAT IT IS ONLY INVARIANT WITH RESPECT TO LEFT MULTIPLICATION!

\begin{figure} 
\includegraphics[width=11cm]{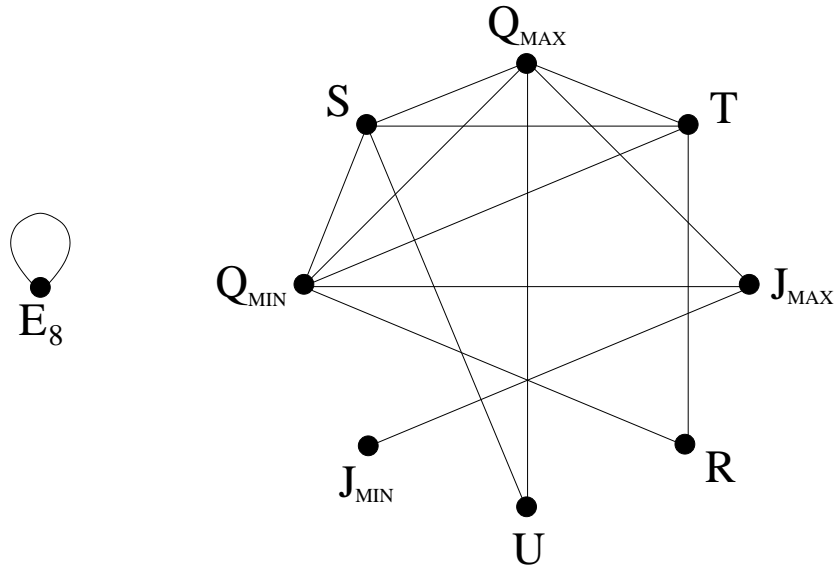}
\caption{Voronoi graphs for  $\mathcal{H}$-perfect forms for $d=8,12$.}
\label{fig:hurwitz}
\end{figure}

It turns out that $T_G$ is of dimension $\binom{d/2}{2}$.
This leaves only one Hurwitz quaternionic form 
and therefore only one {\em $\mathcal{H}$-perfect form}
up to scaling for $d=4$,
which is associated to $\mathsf{D}_4$. 
As shown in Figure~\ref{fig:hurwitz}, there is also only 
one equivalence class of $\mathcal{H}$-perfect forms in dimension~$8$, 
corresponding to $\mathsf{E}_8$.

\begin{table} 
\begin{tabular}{c|c|c|c|c}
$d$  &  $4$  &  $8$  &  $12$  &  $16$  \\
\hline\hline 
$\dim T_{\mathcal{H}}$ & $1$ & $6$ & $15$ & $28$  \\
\hline
\# $\mathcal{H}$-perf. forms  &  1  &  1   &  8  &  ? \\
\hline
maximum $\delta$  &  $0.6168\ldots$  &  $0.2536\ldots$  &
                  $0.03125\ldots$  &  $0.01471\ldots$ \\   
\end{tabular}
\medskip
\caption{Number and maximum densities of $\mathcal{H}$-perfect forms.}
\label{tab:hurwitz}
\end{table}

The situation becomes more interesting in dimension~$12$ 
(cf. Table~\ref{tab:hurwitz}).
By our computations,
there are precisely eight classes of $\mathcal{H}$-perfect forms, 
as previously observed by Jaquet-Chiffelle and 
Sigrist (cf. \cite{sigrist-2008}).
Figure~\ref{fig:hurwitz} uses their labeling.
The data of our computations
can be obtained from our webpage.\footnote{
see
{\footnotesize \url{http://fma2.math.uni-magdeburg.de/~achill/H-perfect-forms-dim??.txt}}
where ?? should be replaced by $8$ or $12$.}
Note that all $\mathcal{H}$-perfect forms are 
also perfect in the classical sense.

A quite interesting consequence of the classification in dimension~$12$
is the possibility to derive of a sharp bound for the largest possible 
sphere packing density among Hurwitz quaternionic forms in 
dimension~$16$, as shown by Vance \cite{vance-2008} using a Mordell type
inequality. She shows that the Barnes-Wall lattice $\mathsf{BW}_{16}$
has the largest density among lattices with a Hurwitz quaternionic structure
in dimension~$16$.
A very nice example of a human-computer-interacted proof!

\section*{Acknowledgements}

The author likes to thank 
Henry Cohn, Rainer Schulze-Pillot, 
Mathieu Dutour Sikiri\'c, Francois Sigrist, 
Jacques Martinet and Stephanie Vance for 
helpful comments and communications.

%
% BIBLIOGRAPHY
%

\providecommand{\bysame}{\leavevmode\hbox to3em{\hrulefill}\thinspace}
\providecommand{\MR}{\relax\ifhmode\unskip\space\fi MR }
% \MRhref is called by the amsart/book/proc definition of \MR.
\providecommand{\MRhref}[2]{%
  \href{http://www.ams.org/mathscinet-getitem?mr=#1}{#2}
}
\providecommand{\href}[2]{#2}

\end{document}